\documentclass[11pt]{amsart}
\addtolength{\oddsidemargin}{-.5in}
\addtolength{\evensidemargin}{-.5in}
\addtolength{\textwidth}{1.0in} 
\usepackage{amsthm}
%\addtolength{\textheight}{1.4in}

%%%%%%%%%%%%%%%%%%%% Text italic %%%%%%%%%%%%%%%%%%%%%%%%%%%%
\theoremstyle{plain}
\newtheorem{thm}{Theorem}[section]
\newtheorem{theorem}[thm]{Theorem}

%%%%%%%%%%%%%%%%%%%% Text roman %%%%%%%%%%%%%%%%%%%%%%%%%%%%%
\theoremstyle{definition}

\newtheorem{definition}[thm]{Definition}

\numberwithin{equation}{section}
%%%%%%%% Special symbols %%%%%%%%%%%%%%%%%%%%%%%%%%%%%%%
% Skriptbuchstaben

% Sonderbuchstaben mit Doppellinie

%Sonstiges

%\newcommand{\rounddown}[1]{\llcorner{#1}\lrcorner}
%\def\bigtimes{\mathop{\mbox{\Huge$\times$}}}
%%%%%%%%%%%%%%%%%%%%%%%%%%%%%%%%%%%%%%%%%%%%%%%%%%%%%%%%%%%%%%

 \title[Coordinate-wise Armijo's condition]{Coordinate-wise Armijo's condition}
 \author{Tuyen Trung Truong}
   \address{Department of Mathematics, University of Oslo, Blindern 0851 Oslo, Norway}
  \email{tuyentt@math.uio.no}
%\thanks{The author was partially supported by .}
    \date{\today}
    \keywords{}
   \subjclass[2010]{}

\begin{document}
\maketitle
%{\centering\footnotesize To my daughter on her birthday occasion\par}

\begin{abstract}
Let $z=(x,y)$ be coordinates for the product space $\mathbb{R}^{m_1}\times \mathbb{R}^{m_2}$. Let $f:\mathbb{R}^{m_1}\times \mathbb{R}^{m_2}\rightarrow \mathbb{R}$ be a $C^1$ function, and $\nabla f=(\partial _xf,\partial _yf)$ its gradient. Fix $0<\alpha <1$.  For a point $(x,y) \in \mathbb{R}^{m_1}\times \mathbb{R}^{m_2}$, a number $\delta >0$ satisfies Armijo's condition at $(x,y)$ if the following inequality holds:
\begin{eqnarray*}
f(x-\delta \partial _xf,y-\delta \partial _yf)-f(x,y)\leq -\alpha \delta (||\partial _xf||^2+||\partial _yf||^2).  
\end{eqnarray*}
     
When $f(x,y)=f_1(x)+f_2(y)$ is a coordinate-wise sum map, we propose the following {\bf coordinate-wise} Armijo's condition. Fix again $0<\alpha <1$. A pair of positive numbers $\delta _1,\delta _2>0$ satisfies the coordinate-wise variant of Armijo's condition at $(x,y)$ if the following inequality holds: 
\begin{eqnarray*}
[f_1(x-\delta _1\nabla f_1(x))+f_2(y-\delta _2\nabla f_2(y))]-[f_1(x)+f_2(y)]\leq -\alpha (\delta _1||\nabla f_1(x)||^2+\delta _2||\nabla f_2(y)||^2). 
\end{eqnarray*} 

We then extend results in our recent previous results, on Backtracking Gradient Descent and some variants, to this setting. We show by an example the advantage of using coordinate-wise Armijo's condition over the usual Armijo's condition. 
\end{abstract}

%\section{Introduction} 
%\section{Results}

\subsection{Gradient Descent methods and Armijo's condition}

Gradient Descent (GD) methods, invented by Cauchy in 1847, aim to find minima of a $C^1$ function $f:\mathbb{R}^m\rightarrow \mathbb{R}$ by the following iterative procedure
\begin{eqnarray*}
z_{n+1}=z_n-\delta (z_n)\nabla f(z_n),
\end{eqnarray*}
where $\delta (z_n)>0$, the learning rate, must be appropriately chosen. Armijo's condition \cite{armijo} is a well known criterion to find learning rates, which require that
\begin{eqnarray*}
f(z-\delta (z)\nabla f(z))-f(z)\leq -\alpha ||\nabla f(z)||^2,
\end{eqnarray*}
where $0<\alpha <1$ is a given constant. 

{\bf Backtracking GD.} Armijo's condition gives rise to Backtracking GD, which is the following procedure. Given $0<\alpha ,\beta <1$ and $\delta _0>1$. For each $z\in \mathbb{R}^m$ we define $\delta (z)$ to be the largest number among $\{\beta ^n\delta _0:~n=0,1,2,\ldots \}$ which satisfies Armijo's condition. For each initial point $z_0$, we then define inductively the sequence $z_{n+1}=z_n-\delta (z_n)\nabla f(z_n)$. 

There are many other variants of GD methods (such as Momentum, NAG and Adam), the interested reader can consult for example the overview paper \cite{ruder}. Here we discuss only one other version of GD which is defined as follows. 

{\bf Standard GD.} Fix $\delta _0>0$. For each initial point, we define inductively the sequence $z_{n+1}=z_n-\delta _0\nabla f(z_n)$.

{\bf Convention.} In the literature of Optimization and Deep Neural Networks, there are many results where "convergence" for an iterative method is meant to be that either $\lim _{n\rightarrow\infty}f(z_n)=-\infty$, or $\lim _{n\rightarrow\infty}\nabla f(z_n)=0$ for the sequences $\{z_n\}$ constructed by that iterative method. In this paper, however, we use the more strict mathematical notion of convergence, requiring that either $\lim _{n\rightarrow\infty}||z_n||=\infty$ or there is $z_{\infty}$ so that $\lim _{n\rightarrow \infty}z_n=z$ and $\nabla f(z_{\infty})=0$. In the appendix to \cite{truong}, we argued why the more strict mathematical notion of convergence is better for applications of GD methods in Deep Neural Networks. We note that many people, including experts, are confused between these two notions.  

Standard GD is used often in practice because of its simplicity. If the gradient $\nabla f$ is globally Lipschitz continuous with Lipschitz constant $L$, then convergence for  $\{z_n\}$ in Standard GD has been proven under some extra assumptions (see \cite{lange}): the learning rate $\delta $ is $<1/(2L)$, $f$ has compact sublevels (that is, all the sets $\{x:~f(x)\leq b\}$ for $b\in \mathbb{R}$ are compact), and the set of critical points of $f$ is bounded and isolated (which in effect implies that $f$ has only a finite number of critical points). If the sequence $\{z_n\}$ converges, and the initial point $z_0$ is outside of a pre-determined set $\mathcal{E}$ of Lebesgue measure zero, then the limit point cannot be a generalised saddle point, see \cite{lee-simchowitz-jordan-recht, panageas-piliouras}. Here, by a generalised saddle point, we mean a critical point $z_{\infty}$ near which $f$ is $C^2$  and the Hessian $\nabla ^2f(z_{\infty})$ has at least one {\bf negative} eigenvalue. However, in practical applications such as Deep Neural Networks (DNN), there are many cases where these assumptions are not satisfied or not easy to check. Hence, a common practice in the DNN community is to try to do many seek-and-try attempts (manual fine-tuning or grid search) to find a learning rate which allow to achieve good performance. This practice is however time-consuming, and can lead to instability and non-reproducibility, see the appendix in \cite{truong} for a detailed analysis.   

Backtracking GD is so far the best theoretically guaranteed method among Gradient Descent methods and other iterative methods such as Newton's, and also performs very well in practice. In fact, it is well known that for a sequence $\{z_n\}$ constructed by Backtracking GD method, if $z_{\infty}$ is a cluster point of the sequence, then $\nabla f(z_{\infty}) =0$. When $f$ is {\bf real analytic}, \cite{absil-mahony-andrews} shows that $\{z_n\}$ always converge. In the case of a general $C^1$ function $f$, it was shown in \cite{truong-nguyen} that if $f$ has at most countably many critical points (which is satisfied by Morse functions, in turn consists of an open and dense subset of the set of all functions), then $\{z_n\}$ converges.  In that paper, we also defined Backtracking versions for Momentum and NAG, and proved corresponding convergence results. We also defined a version called Two-way Backtracking GD, based on the observation that in the long run the learning rates used in Backtracking GD will belong to a finite set and hence we can start the search for learning rate $\delta (x_{n+1})$ not at $\delta _0$ but at $\delta (x_n)$, and multiply or divide by $\beta$ depending on whether Armijo's condition is satisfied for $\delta (x_n)$ or not. This helps to save time and make performing Backtracking GD in practical examples not much more costly than Standard GD, when one takes into account the practice of manual fine-tuning or grid search of learning rates mentioned in the above paragraph. We have implemented algorithms using Two-way Backtracking GD and some other refinements in DNN, with performance on the image dataset CIFAR10 better than other state-of-the art optmization methods such as Momentum, NAG and Adam. ({\bf Important remark:} Here the comparison was done using same DNN architectures.) Our algorithms also work well across different sizes of mini-batches and DNN architectures, and find learning rates automatically (in particularly, avoid completely manual fine-tuning or grid search of learning rates). Source codes for the algorithms can be found in \cite{mbtoptimizer}. A result on avoidance of saddle points, with conclusions weaker than \cite{lee-simchowitz-jordan-recht, panageas-piliouras}, for all $C^1$ functions is also given in \cite{truong-nguyen}.  When the gradient $\nabla f$ is moreover {\bf locally} Lipschitz continuous, we defined  in \cite{truong} a continuous version of Backtracking GD and a new discrete version of Backtracking GD, and prove convergence to minima for these methods, under more general assumptions than in \cite{lee-simchowitz-jordan-recht, panageas-piliouras}.

\subsection{Coordinate-wise Armijo's condition and Backtracking GD}

The purpose of the current paper is to improve Armijo's condition to adapt better to special cases of cost functions $f$. From now on, we consider exclusively the following {\bf special case}: $\mathbb{R}^m=\mathbb{R}^{m_1}\times \mathbb{R}^{m_2}$ is a product space with coordinates $z=(x,y)$, and $f(x,y)=f_1(x)+f_2(y)$ is a sum. In this case, besides the standard Armijo's condition for $f$, we can also make use of the Armijo's conditions for $f_1$ and $f_2$ separately. We have thus the following definitions.

{\bf Coordinate-wise Armijo's condition.} Fix $0<\alpha <1$ A pair $\delta _1,\delta _2>0$ satisfies Coordinate-wise Armijo's condition at $z=(x,y)$ if 
\begin{eqnarray*}
f(x-\delta _1\nabla f_1(x),y-\delta _2\nabla f_2(y))-f(x,y)\leq -\alpha [ \delta _1||\nabla f_1(x)||^2+\delta _2||\nabla f_2(y)||^2 ].
\end{eqnarray*}

\begin{definition} 
{\bf Coordinate-wise Backtracking GD.} Fix $0<\alpha ,\beta <1$ and $\delta _0>0$. We define for each $z=(x,y)$ the pair $\delta _1(x),\delta _2(y)$ given by the usual Backtracking GD procedure for $f_1(x)$ and $f_2(y)$ respectively. We then define for each initial point $z_0=(x_0,y_0)$ iteratively the sequence 
\begin{eqnarray*}
z_{n+1}=(x_n-\delta _1(x)\nabla f_1(x),y-\delta _2(y)\nabla f_2(y)).  
\end{eqnarray*}  
\label{Definition1}\end{definition}

{\bf Remark.} If $\delta (x,y)$ is the number given by the usual Backtracking GD for $f$ itself, then $\delta (x,y)\leq \max \{\delta _1(x),\delta _2(y)\}$. 

Inspired by the corresponding definition in \cite{truong}, we also introduce the following new discrete, coordinate-wise version of Backtracking GD. 

\begin{definition}
{\bf Coordinate-wise Backtracking GD-New.} Assume that $f_1$ and $f_2$ satisfy the assumptions in Definition 1.2 in \cite{truong} (that is, Backtracking GD-New). That is, there are continuous functions $r_1,L_1:\mathbb{R}^{m_1}\rightarrow (0,\infty)$ and $r_2,L_2:\mathbb{R}^{m_2}\rightarrow (0,\infty )$ so that  for every $x\in \mathbb{R}^{m_1}$ and $y\in \mathbb{R}^{m_2}$, then $f_1$ is Lipschitz continuous on $B(x,r_1(x))$ with Lipschitz constant $L_1(x)$, and $f_2$ is Lipschitz continuous on $B(y,r_2(y))$ with Lipschitz constant $L_2(y)$.

Fix $0<\alpha ,\beta <1$ and $\delta _0>0$. We define for each $z=(x,y)$ the pair $\delta _1(x),\delta _2(y)$ as follows: $\delta _1(x)$ is the largest number $\delta $ among $\{\beta ^n\delta _0:~n=0,1,2\ldots \}$ satisfying the following assumptions:
\begin{eqnarray*}
&&\delta < \alpha /L_1(x),\\
&&\delta ||\nabla f_1(x)|| < r_1(x).
\end{eqnarray*}
We define $\delta _2(y)$ similarly. 

The Coordinate-wise Backtracking GD-New procedure is as follows. For each initial point $z_0=(x_0,y_0)$, we define iteratively 
\begin{eqnarray*}
z_{n+1}=(x_n-\delta _1(x_n)\nabla f_1(x_n), y_n-\delta _2(y_n)\nabla f_2(y_n)).
\end{eqnarray*}
\label{Definition2}\end{definition}

\subsection{Main results}

Coordinate-wise Backtracking versions can also be defined for the other variants in \cite{truong-nguyen, truong}, and all the main results in these two papers can be extended to the coordinate-wise version. However, to simplify the presentation, we state the main results of this paper only for the two versions defined in the previous subsections. 

\begin{theorem}
Let $f:\mathbb{R}^{m_1}\times \mathbb{R}^{m_2}$ be a $C^1$ function of the form $f(x,y)=f_1(x)+f_2(y)$. For each initial point $z_0=(x_0,y_0)$, we define the sequence $\{z_{n}\}$ by the Coordinate-wise Armijo's condition. Then we have the following: 

1) Any cluster point of $\{z_n\}$ is a critical point of $f$. 

2) Either $\lim _{n\rightarrow\infty}||z_n||=\infty$ or $\lim _{n\rightarrow\infty}||z_{n+1}-z_n||=\infty$.

3) If $f$ has at most countably many critical points, then either $\{z_n\}$ converges to a point $z_{\infty}$ for which $\nabla f(z_{\infty})=0$, or $\lim _{n\rightarrow \infty}||z_n||=\infty$. 

4) In general, if every connected component of the critical point set of $f$ is compact, then the set of cluster points of $\{z_n\}$ is connected. 
 
\label{TheoremMain}\end{theorem}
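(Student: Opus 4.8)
The plan is to exploit the product structure of the problem. By Definition~\ref{Definition1}, the pair $(\delta_1(x_n),\delta_2(y_n))$ is produced by running the \emph{ordinary} Backtracking GD procedure for $f_1$ at $x_n$ and for $f_2$ at $y_n$ separately; hence the sequence $z_n=(x_n,y_n)$ splits, with $\{x_n\}$ being exactly the Backtracking GD sequence for $f_1$ started at $x_0$ and $\{y_n\}$ the Backtracking GD sequence for $f_2$ started at $y_0$. Consequently each of $\{x_n\}$ and $\{y_n\}$ obeys the conclusions of the main theorem of \cite{truong-nguyen}, and the whole task is to reassemble the joint statements from the two coordinate-wise ones. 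We will use freely the identities $\|z_n\|^2=\|x_n\|^2+\|y_n\|^2$ and $\|z_{n+1}-z_n\|^2=\|x_{n+1}-x_n\|^2+\|y_{n+1}-y_n\|^2$, the equality $\mathrm{Crit}(f)=\mathrm{Crit}(f_1)\times\mathrm{Crit}(f_2)$ (which holds because $\nabla f=(\nabla f_1,\nabla f_2)$), and the fact that a sequence in a Euclidean space with no cluster point must have norm tending to $\infty$.

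For part 1), if $z_{n_k}\to(a,b)$, then $a$ is a cluster point of $\{x_n\}$ and $b$ a cluster point of $\{y_n\}$, so $\nabla f_1(a)=0$ and $\nabla f_2(b)=0$ by \cite{truong-nguyen}, whence $\nabla f(a,b)=0$. For part 2), whose conclusion we read as $\lim_{n\to\infty}\|z_{n+1}-z_n\|=0$, suppose $\lim_n\|z_n\|=\infty$ fails; then $\{z_n\}$, and therefore also $\{x_n\}$ and $\{y_n\}$, has a bounded subsequence, so neither $\|x_n\|$ nor $\|y_n\|$ tends to $\infty$, and the coordinate-wise version of this statement forces $\|x_{n+1}-x_n\|\to0$ and $\|y_{n+1}-y_n\|\to0$; adding gives $\|z_{n+1}-z_n\|\to0$. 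For part 3), if $\mathrm{Crit}(f_1)$ or $\mathrm{Crit}(f_2)$ is empty then $\mathrm{Crit}(f)=\emptyset$, so by part 1) the sequence $\{z_n\}$ has no cluster point and $\|z_n\|\to\infty$; otherwise $\mathrm{Crit}(f_1)$ and $\mathrm{Crit}(f_2)$ are both nonempty and at most countable, so by \cite{truong-nguyen} each of $\{x_n\}$ and $\{y_n\}$ either converges to a critical point of $f_1$, resp.\ $f_2$, or has norm tending to $\infty$. If both converge, then $\{z_n\}$ converges to a critical point of $f$; if one of the two has norm tending to $\infty$, then so does $\{z_n\}$ since $\|z_n\|\ge\max(\|x_n\|,\|y_n\|)$.

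Part 4) is the step that needs genuine care, because the cluster set $\mathcal{C}$ of $\{z_n\}$ is only \emph{contained} in $C_1\times C_2$, with $C_i$ the cluster set of the $i$-th coordinate, and is in general a proper subset of it; so one cannot simply quote the coordinate-wise statement, and must instead argue directly from parts 1) and 2). If $\|z_n\|\to\infty$ then $\mathcal{C}=\emptyset$ is connected. Otherwise $\|z_{n+1}-z_n\|\to0$ by part 2), $\mathcal{C}$ is a nonempty closed subset of $\mathrm{Crit}(f)$ by part 1), and the sequence $f(z_n)=f_1(x_n)+f_2(y_n)$ is nonincreasing (by the coordinate-wise Armijo inequality) and, being bounded below along a bounded subsequence, converges to some $c$, so $\mathcal{C}\subseteq\mathrm{Crit}(f)\cap f^{-1}(c)$. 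One then shows that $\mathcal{C}$ meets, hence lies inside, a single connected component of $\mathrm{Crit}(f)$: if it met two distinct components, then (using that such a component is compact and $\mathrm{Crit}(f)$ is closed) one could surround one of them by a bounded open set whose boundary avoids $\mathrm{Crit}(f)$, and since $\{z_n\}$ has cluster points both inside and outside this set while moving with vanishing steps, it would acquire a cluster point on the boundary, contradicting $\mathcal{C}\subseteq\mathrm{Crit}(f)$. Thus $\mathcal{C}$ is bounded, $\{z_n\}$ is bounded, and the classical fact that a bounded sequence with $\|z_{n+1}-z_n\|\to0$ has a compact connected set of cluster points finishes the proof. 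This is exactly the argument of the corresponding result in \cite{truong-nguyen}, which goes through verbatim once parts 1) and 2) are available; that topological step, and not the coordinate splitting, is where the real work of part 4) lies.
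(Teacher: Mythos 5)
Your proposal is correct, and its backbone is the same as the paper's (one-line) proof: since Definition~\ref{Definition1} runs ordinary Backtracking GD on $f_1$ and $f_2$ independently, the sequence splits as $z_n=(x_n,y_n)$ with each coordinate an honest Backtracking GD sequence, and one imports the results of \cite{truong-nguyen} coordinate-wise. You also correctly read the obvious typo in part 2) as $\lim_{n\to\infty}\|z_{n+1}-z_n\|=0$. Where you go beyond the paper is in the reassembly, and this is genuinely needed: the paper's proof says only ``apply the results separately to $f_1$ and $f_2$,'' which suffices for parts 1)--3) (modulo your correct observation in part 3) that ``$\mathrm{Crit}(f)$ countable'' does not force both $\mathrm{Crit}(f_1)$ and $\mathrm{Crit}(f_2)$ to be countable when one of them is empty, a case you dispose of via part 1)), but it does not by itself give part 4): the cluster set of $\{z_n\}$ is in general a proper subset of the product of the coordinate cluster sets, and the compactness hypothesis on components of $\mathrm{Crit}(f)=\mathrm{Crit}(f_1)\times\mathrm{Crit}(f_2)$ is vacuous when one factor is empty, so connectedness of the joint cluster set cannot simply be quoted coordinate-wise. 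Your remedy --- rerunning the topological argument of \cite{truong-nguyen} directly on $\{z_n\}$, using parts 1) and 2), the monotonicity of $f(z_n)$ from the coordinate-wise Armijo inequality, the separation of a compact component of the closed set $\mathrm{Crit}(f)$ by a bounded open set whose boundary misses $\mathrm{Crit}(f)$, and the classical fact that a bounded sequence with vanishing steps has compact connected cluster set --- is exactly the right fix, and in this respect your write-up is more complete than the paper's.
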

\begin{proof}
We simply apply the corresponding results in \cite{truong-nguyen} separately for $f_1$ and $f_2$.
\end{proof}

\begin{theorem}
Let $f$ be a $C^{1}$ function which satisfies the condition in Definition \ref{Definition2}. Assume moreover that $\nabla f$ is $C^2$ near its generalised saddle points. Choose $0<\delta _0$ and $0<\alpha ,\beta <1$. For any initial points $z_0=(x_0,y_0)$, we construct the sequence $\{z_n\}$ as in Definition \ref{Definition2}. Then: 

(i) For every $z_0$, the sequence $\{z_{n}\}$ either satisfies $\lim _{n\rightarrow\infty}||z_{n+1}-z_n||=0$ or $
\lim _{n\rightarrow\infty}||z_n||=\infty$. Each cluster point of $\{z_n\}$ is a critical point of $f$. If moreover, $f$ has at most countably many critical points, then $\{z_n\}$ either converges to a critical point of $f$ or $\lim _{n\rightarrow\infty}||z_n||=\infty$. 

(ii) For {\bf random choices} of $\delta _0, \alpha $ and $\beta$, there is a set $\mathcal{E}_1$ of Lebesgue measure $0$ so that for all $z_0\notin  \mathcal{E}_1$, any cluster point of the sequence $\{z_{n}\}$ cannot be an isolated generalised saddle point. 

(iii) Assume that there is $L_0>0$ so that if $z=(x,y)$ is a {\bf non-isolated} generalised saddle point of $f$, then $\max\{L_1(x),L_2(y)\}\leq L_0$. Then, for {\bf random choices} of $\delta _0, \alpha $ and $\beta$ with $ \alpha /L_0 \notin \{\beta ^n\delta _0:~n=0,1,2,\ldots \}$, there is a set $\mathcal{E}_2$ of Lebesgue measure $0$ so that for all $z_0\notin  \mathcal{E}_2$, if the sequence $\{z'_n\}$  (constructed in Definition \ref{Definition2}, {\bf but} with $L_1(x)$ replaced by $\max\{L_1(x),L_0\}$, and $L_2(y)$ replaced by $\max\{L_2(y),L_0\}$) converges, then the limit point cannot be a generalised saddle point.

\label{Theorem1}\end{theorem}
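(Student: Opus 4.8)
\emph{Plan of proof.} The key observation is that the coordinate-wise Backtracking GD-New procedure \emph{decouples}. Since $\delta _1(x_n)$ is defined purely from $f_1,r_1,L_1$ evaluated at $x_n$, and $\delta _2(y_n)$ purely from $f_2,r_2,L_2$ at $y_n$, the sequence $\{z_n\}=\{(x_n,y_n)\}$ of Definition \ref{Definition2} is exactly the pair formed by the single-variable Backtracking GD-New sequence $\{x_n\}$ for $f_1$ started at $x_0$ and the single-variable Backtracking GD-New sequence $\{y_n\}$ for $f_2$ started at $y_0$; likewise $\{z'_n\}=\{(x'_n,y'_n)\}$, where the $x$-coordinate is run with Lipschitz function $\max\{L_1,L_0\}$ and the $y$-coordinate with $\max\{L_2,L_0\}$. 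Thus every assertion will follow by applying the theorem for Backtracking GD-New in \cite{truong} separately to $f_1$ and to $f_2$ and recombining, via three elementary structural facts: (a) $\mathrm{Crit}(f)=\mathrm{Crit}(f_1)\times\mathrm{Crit}(f_2)$, and $(x_*,y_*)$ is isolated in $\mathrm{Crit}(f)$ iff $x_*$ is isolated in $\mathrm{Crit}(f_1)$ and $y_*$ is isolated in $\mathrm{Crit}(f_2)$; (b) $f$ is $C^2$ near $(x_*,y_*)$ iff $f_1$ is $C^2$ near $x_*$ and $f_2$ near $y_*$, in which case $\nabla^2 f(x_*,y_*)$ is block-diagonal with blocks $\nabla^2 f_1(x_*),\nabla^2 f_2(y_*)$, so $(x_*,y_*)$ is a generalised saddle point of $f$ iff $x_*$ is a generalised saddle point of $f_1$ (and $y_*\in\mathrm{Crit}(f_2)$) or $y_*$ is a generalised saddle point of $f_2$ (and $x_*\in\mathrm{Crit}(f_1)$); (c) $(x_*,y_*)$ is a cluster point of $\{z_n\}$ only if $x_*$ is a cluster point of $\{x_n\}$ and $y_*$ a cluster point of $\{y_n\}$. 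We also use repeatedly that a finite union of Lebesgue-null sets is Lebesgue-null, both in the parameter space of $(\delta _0,\alpha ,\beta )$ and in $\mathbb{R}^{m_1}\times\mathbb{R}^{m_2}$.

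For part (i): the \cite{truong} theorem applied to $f_1$ gives $\lim_n\|x_{n+1}-x_n\|=0$ or $\lim_n\|x_n\|=\infty$, that each cluster point of $\{x_n\}$ lies in $\mathrm{Crit}(f_1)$, and, when $f_1$ has at most countably many critical points, that $\{x_n\}$ converges to a point of $\mathrm{Crit}(f_1)$ or $\|x_n\|\to\infty$; similarly for $f_2$. Since $\|z_{n+1}-z_n\|^2=\|x_{n+1}-x_n\|^2+\|y_{n+1}-y_n\|^2$ and $\|z_n\|\ge\max\{\|x_n\|,\|y_n\|\}$, the first dichotomy transfers to $\{z_n\}$, and the cluster-point claim follows from (a) and (c). If $\mathrm{Crit}(f)$ is countable and nonempty, both $\mathrm{Crit}(f_i)$ are countable; running through the four ``converges / goes to infinity'' cases for $\{x_n\}$ and $\{y_n\}$ shows $\{z_n\}$ converges to a point of $\mathrm{Crit}(f)$ or $\|z_n\|\to\infty$. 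If $\mathrm{Crit}(f)=\emptyset$, say $\mathrm{Crit}(f_1)=\emptyset$, then $\{x_n\}$ has no cluster point, hence $\|x_n\|\to\infty$ and $\|z_n\|\to\infty$. For part (ii): the same theorem applied to $f_1$ provides, for $(\delta _0,\alpha ,\beta )$ outside a null parameter set, a null set $\mathcal{E}^{(1)}\subset\mathbb{R}^{m_1}$ so that for $x_0\notin\mathcal{E}^{(1)}$ no cluster point of $\{x_n\}$ is an isolated generalised saddle point of $f_1$; take the analogous $\mathcal{E}^{(2)}\subset\mathbb{R}^{m_2}$, choose parameters avoiding both exceptional parameter sets, and set $\mathcal{E}_1=(\mathcal{E}^{(1)}\times\mathbb{R}^{m_2})\cup(\mathbb{R}^{m_1}\times\mathcal{E}^{(2)})$. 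For $z_0\notin\mathcal{E}_1$, if a cluster point $(x_*,y_*)$ of $\{z_n\}$ were an isolated generalised saddle point of $f$, then by (a), (b), (c) and symmetry $x_*$ would be an isolated generalised saddle point of $f_1$ and a cluster point of $\{x_n\}$, contradicting $x_0\notin\mathcal{E}^{(1)}$.

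For part (iii): first transfer the uniform Lipschitz hypothesis to each factor. If $\mathrm{Crit}(f_1)=\emptyset$ or $\mathrm{Crit}(f_2)=\emptyset$, then $\mathrm{Crit}(f)=\emptyset$, and since part (i) (applied with $L_i$ replaced by $\max\{L_i,L_0\}$) gives that cluster points of $\{z'_n\}$ lie in $\mathrm{Crit}(f)$, the sequence $\{z'_n\}$ has no cluster point and in particular does not converge, so (iii) holds vacuously. Otherwise, if $x$ is a non-isolated generalised saddle point of $f_1$, pick any $y\in\mathrm{Crit}(f_2)$: by (a) and (b), $(x,y)$ is a generalised saddle point of $f$ (its Hessian block $\nabla^2 f_1(x)$ already has a negative eigenvalue) and is non-isolated in $\mathrm{Crit}(f)$, so the hypothesis yields $L_1(x)\le L_0$; hence $L_0$ is a uniform bound for $f_1$ and the modification $L_1\mapsto\max\{L_1,L_0\}$ is exactly the one used in \cite{truong} for $f_1$. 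Part (iii) of the \cite{truong} theorem applied to $f_1$ then yields, for random $\delta _0,\alpha ,\beta $ with $\alpha /L_0\notin\{\beta ^n\delta _0:n\ge0\}$, a null set $\mathcal{E}_2^{(1)}\subset\mathbb{R}^{m_1}$ such that for $x_0\notin\mathcal{E}_2^{(1)}$, if $\{x'_n\}$ converges its limit is not a generalised saddle point of $f_1$; similarly $\mathcal{E}_2^{(2)}\subset\mathbb{R}^{m_2}$. With $\mathcal{E}_2=(\mathcal{E}_2^{(1)}\times\mathbb{R}^{m_2})\cup(\mathbb{R}^{m_1}\times\mathcal{E}_2^{(2)})$: if $z_0\notin\mathcal{E}_2$ and $\{z'_n\}$ converges to $(x_\infty,y_\infty)$, then $\{x'_n\}\to x_\infty$ and $\{y'_n\}\to y_\infty$, neither of which is a generalised saddle point of the respective $f_i$, so by (b) $(x_\infty,y_\infty)$ is not a generalised saddle point of $f$.

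The main obstacle is confined to part (iii): one must check carefully that the single hypothesis on $f$ at its non-isolated generalised saddle points genuinely supplies the per-factor hypotheses demanded by \cite{truong}, handle the degenerate situations in which one of $\mathrm{Crit}(f_1),\mathrm{Crit}(f_2)$ is empty (so $f$ has no generalised saddle points and the per-factor claims are vacuous or automatic), and confirm that the ``$L_i\mapsto\max\{L_i,L_0\}$'' modification in Definition \ref{Definition2} matches term-by-term the modification used in \cite{truong} for each factor. Parts (i) and (ii), by contrast, are a direct translation of the single-variable statements through the structural facts (a)–(c).
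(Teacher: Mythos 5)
Your proposal is essentially the paper's own argument: the paper proves Theorem \ref{Theorem1} in one line by applying Theorem 1.3 of \cite{truong} separately to $f_1$ and $f_2$, which is exactly your decoupling strategy. You merely make explicit the structural facts (product structure of the critical set, block-diagonal Hessian, coordinate-wise cluster points, unions of null sets, and the transfer of the $L_0$-bound in part (iii)) that the paper leaves implicit, so the approach and conclusion coincide.
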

\begin{proof}
We simply apply Theorem 1.3 in \cite{truong} separately to $f_1$ and $f_2$.   
\end{proof}

\subsection{An example}

 Here we provide an example illustrating the advantage of using coordinate-wise Armijo's condition over using the usual Armijo's condition. 

Let $f(x,y)=x^3\sin (1/x) + y^3\sin (1/y)$. It is $C^1$, but is not locally Lipschitz continuous near $\{x=0\}\cup \{y=0\}$. It is neither convex nor real analytic. It has countably many critical points and countably many generalised saddle points. In fact, one can check that a critical point of $f$ is either $(0,0)$, or a local minimum, or a generalised saddle point. Moreover, $(0,0)$ is the only non-isolated critical point of $f$. 

This map has the special form which Coordinate-wise Armijo's condition can be applied, $f(x,y)=g(x)+g(y)$ where $g(t)=t^3\sin (1/t)$. We can compute that $g'(t)=3t^2\sin (1/t) - t\cos (1/t)$. The second derivative $g"$ is continuous in $\mathbb{R}\backslash \{0\}$, and does not exist at $0$. A rough estimate  gives us $|g"(t)|\leq 6|t|+4+1/|t|$ when $t\not= 0$, and $\sup _{t\not= 0}|g"(t)|=\infty$. 

This function does not satisfy the assumptions in Definition \ref{Definition2}.  However, using the ideas in Example 1 in \cite{truong}, we can proceed as follows. Choose $r:\mathbb{R}\rightarrow (0,\infty)$ any continuous function such that $r(t)<|t|$ for all $t$, and choose $L(t)=\sup _{B(t,r(t))}||g"(t)||$. Fix $0<\alpha ,\beta <1$ and $\delta _0>0$. For $t\not=0$, we define $\delta (t)$ as in Definition \ref{Definition2} (or, the same, as in Definition 1.3 in \cite{truong}). Now the following claim is clear, given that $L(t)$ is continuos. 

{\bf Claim 1.} If $K\subset \mathbb{R}$ is a compact subset so that $0\notin K$, then $\sup _{t\in K}\delta (t)>0$. 

For each initial point $t_0$, we define iteratively $t_{n+1}=t_n-\delta (t_n)\nabla g(t_n)$. We have the following. 

{\bf Claim 2.} Any cluster point of $\{t_n\}$ is a critical point of $g$. 
\begin{proof}
If the cluster point is $0$, then there is nothing to prove. Otherwise, we can use Claim 1 and arguments in \cite{bertsekas} to obtain the assertion. 
\end{proof}

Similarly as in \cite{truong-nguyen} and \cite{truong}, we obtain the following result, by noting that $f$ has compact supports. 

{\bf Claim 3.} For every initial point $t_0$, the sequence $\{t_n\}$ converges to a critical point of $g$. 

We now have the following theorem about convergence to minima for $g$. 

{\bf Claim 4.} Assume that $\alpha, \beta ,\delta _0$ are randomly chosen. There is a set $\mathcal{E}\subset \mathbb{R}$ so that if $t_0\in \mathbb{R}\backslash \mathcal{E}$ then $\{t_n\}$ converges either  to $0$ or a local minimum of $g$. 
\begin{proof}
By using Theorem \ref{Theorem1} and Claim 3, also the special properties of the function $f$, we obtain that under the assumptions in Claim 4, the sequence $\{t_n\}$ converges either to $0$ or a local minimum of $g$. 
\end{proof}

Finally, we obtain the following theorem about convergence to minima for $f(x,y)=g(x)+g(y)$. 

{\bf Claim 5.} Assume that $\alpha, \beta ,\delta _0$ and $\mathcal{E}$ is chosen as in Claim 4. Then for all $z_0=(x_0,y_0)\notin (\mathcal{E}\times \mathbb{R}\cup \mathbb{R}\times \mathcal{E})$, the sequence $z_{n+1}=(x_n-\delta (x_n)g'(x_n), y_n-\delta (y_n)g'(y_n))$ (constructed as above) converges, and the limit point is  either $(0,0)$ or a local minimum of $f$. 
\begin{proof}
The proof follows immediately from Claim 4. 
\end{proof}

{\bf Remarks.} Note that the set $\mathcal{E}\times \mathbb{R}\cup \mathbb{R}\times \mathcal{E}$ has Lebesgue measure 0. Hence, here we proved a stronger than that stated in Example 1 in \cite{truong}. Moreover, there is a mistake with the proof given for Example 1 in \cite{truong}, that is the function $\delta (x,y)$ does not satisfy the property needed to show that any cluster point of $\{z_n\}$ therein is a critical point of $f$. The point is that if $K$ is a compact subset of $\mathbb{R}^2$ on which $\nabla f$ is nowhere $0$, then $\inf _K\delta (x,y)>0$ only if $K$ is a subset of $\mathbb{R}^2\backslash (\{x=0\}\cup \{y=0\})$. On the other hand, proof of part 1 of Theorem 2.1 in \cite{truong-nguyen} still goes through, and hence we can still prove that either $\lim _{n\rightarrow \infty}||z_{n+1}-z_n||=0$ or $\lim _{n\rightarrow\infty}||z_n||=\infty$. Therefore, the following correction of statements concerning Example 1 in \cite{truong} is valid. 

{\bf Claim 6.} Let the sequence $\{z_n\}$ be constructed as in Example 1 in \cite{truong}.  Then there is a set $\mathcal{E}'\subset \mathbb{R}^2$ of Lebesgue measure $0$ so that one of the following two statements is valid:  

(1) The sequence $\{z_n\}$ converges to a critical point of $f$ contained in $\mathbb{R}^2\backslash (\{x=0\}\cup \{y=0\})$.

(2) The cluster set of $\{z_n\}$ is a connected subset of $\{x=0\}\cup \{y=0\}$.

\subsection{Conclusions}
 
In this paper we defined the coordinate-wise version of Armijo's condition for functions of the special form: $f(x,y)=f_1(x)+f_2(y)$. We then used this to define coordinate-wise versions  of Backtracking GD and other variants defined in \cite{truong-nguyen, truong}, and we proved the analogs of these two papers for the new coordinate-wise versions. We then applied the results to obtain a result on convergence to minima for $f(x,y)=x^3\sin (1/x) + y^3\sin (1/y)$. The result obtained here is stronger than that obtained in Example 1 in \cite{truong}, and currently cannot be treated by methods of other authors. Besides open questions listed in \cite{truong-nguyen, truong} for general Backtracking GD and variants, we think that the following three questions tailored for the coordinate-wise versions are worth exploring in the future. 

{\bf Question 1.} In the example in previous subsection, can we also avoid the point $(0,0)$? An affirmative answer can advance techniques to deal with iterative methods to find minima for more general $C^1$ functions, and will most likely involve new insights in Dynamical Systems. 

{\bf Question 2.}  Is there a coordinate version for Armijo's condition, which applies for general functions (and not just those of the special type considered in this paper) and still allows ones to obtain strong results as in Theorems \ref{TheoremMain} and \ref{Theorem1} here, as well as other main results in \cite{truong-nguyen, truong}? 

{\bf Question 3.} For functions of the special form $f(x,y)=f_1(x)+f_2(y)$, is it generally better to use Coordinate-wise Armijo's condition than the usual Armijo's conditions? One should take into account that in the remark after Definition \ref{Definition1}, even though we always have $\delta (x,y)\leq \max\{\delta _1(x),\delta _2(y)\}$, it may happen that $\delta (x,y)>\min\{\delta _1(x),\delta _2(y)\}$. The other point is that when using Coordinate-wise Armijo's condition, it may make it easier for the sequence $\{z_n\}$ to converge to infinity instead of a finite point.


\begin{thebibliography}{xx} 

\bibitem{absil-mahony-andrews} P.-A. Absil, R. Mahony and B. Andrews, {\it Convergence of the iterates of descent methods for analytic cost functions}, SIAM J. Optim. 16 (2005), vol 16, no 2, 531--547. 

\bibitem{armijo} L. Armijo, {\it Minimization of functions having Lipschitz continuous first partial derivatives}, Pacific J. Math. 16 (1966), no. 1, 1--3. 

\bibitem{bertsekas} D. P. Bertsekas, {\it Nonlinear programming}, 2nd edition, Athena Scientific, Belmont, Massachusetts, 1999. 

\bibitem{lange}{K. Lange,} {\it Optimization}, 2nd edition, Springer texts in statistics, New York 2013.  

\bibitem{lee-simchowitz-jordan-recht} J. D. Lee, M. Simchowitz, M. I. Jordan and B. Recht, Gradient descent only converges to minimizers, JMRL: Workshop and conference proceedings, vol 49 (2016), 1--12. 

\bibitem{panageas-piliouras} I. Panageas and G. Piliouras, {\it Gradient descent only converges to minimizers: Non-isolated critical points and invariant regions}, 8th Innovations in theoretical computer science conference (ITCS 2017), Editor: C. H. Papadimitrou, article no 2, pp. 2:1--2:12, Leibniz international proceedings in informatics (LIPICS), Dagstuhl Publishing. Germany. 

\bibitem{ruder} S. Ruder, {\it An overview of gradient descent optimisation algorithms}, arXiv: 1609.04747. 

\bibitem{truong} T. T. Truong, {\it Convergence to minima for the continuous version of Backtracking Gradient Descent}, arXiv: 1911.04221. 

\bibitem{truong-nguyen} T. T. Truong and T. H. Nguyen, {\it Backtracking gradient descent method for general $C^1$ functions with applications to Deep Learning}, arXiv: 1808.05160v2. 

\bibitem{mbtoptimizer} https://github.com/hank-nguyen/MBT-optimizer
\end{thebibliography}
\end{document}